\theoremstyle{plain}
\newtheorem{lemma}{Lemma}
\newtheorem{theorem}[lemma]{Theorem}
\newtheorem{cor}[lemma]{Corollary}
\theoremstyle{definition}
\newtheorem{remark}[lemma]{Remark}
\newtheorem{ques}[lemma]{Question}
\title{On the smallest eigenvalues of the line graphs of some trees}
\author{
{\sc Akihiro MUNEMASA}\\
[1ex]
{\small 
Graduate School of Information Sciences,} \\
{\small 
Tohoku University, Sendai 980-8579, Japan} \\
{\small 
{\it E-mail address}: {\tt munemasa@math.is.tohoku.ac.jp}}\\
\\
{\sc Yoshio SANO}
\thanks{This work was supported by JSPS KAKENHI grant number 25887007.}\\
[1ex]
{\small 
Division of Information Engineering,} \\
{\small 
Faculty of Engineering, Information and Systems,} \\
{\small 
University of Tsukuba, Ibaraki 305-8573, Japan} \\
{\small 
{\it E-mail address}: {\tt sano@cs.tsukuba.ac.jp}}\\
\\
{\sc Tetsuji TANIGUCHI}
\thanks{This work was supported by JSPS KAKENHI grant number 25400217.}\\
[1ex]
{\small 
Department of Electronics and Computer Engineering,}\\
{\small 
Hiroshima Institute of Technology, Hiroshima 731-5193, Japan}\\
{\small 
{\it E-mail address}: {\tt t.taniguchi.t3@cc.it-hiroshima.ac.jp}}
} 
\date{}
\begin{document}

\maketitle

\begin{abstract}
In this paper, we study the characteristic polynomials 
of the line graphs of generalized Bethe trees. 
We give an infinite family of
such graphs sharing the same smallest eigenvalue.
Our family generalizes the family of coronas of complete graphs
discovered by Cvetkovi\'{c} and Stevanovi\'{c}. 
\end{abstract}


{\bf Keywords:}
graph eigenvalue, 
line graph, 
tree, 
generalized Bethe tree. 

{\bf 2010 Mathematics Subject Classification:}
05C05, 05C50, 05C76

\newpage

\section{Introduction}\label{sec:001}

All graphs considered in this paper are finite, undirected and simple.
By an eigenvalue of a graph we mean an eigenvalue of its adjacency matrix.
It is well known that graphs 
whose smallest eigenvalue is greater than $-2$
are the line graphs of trees, or the line graphs of a unicyclic graph
with an odd cycle, certain generalized line graphs of trees, or
exceptional graphs arising from the root system $E_8$ (see
\cite{DC79}). 

In this paper, we focus on the line graphs of trees 
and study the smallest eigenvalue of a particular type of such graphs. 
Our research is motivated by a question raised
by Cvetkovi\'{c} and Stevanovi\'{c}. 
In \cite{CS}, 
it is shown that 
the sequence $\{ \lambda_{\min}(K_n \otimes K_q) \}_{n=1}^{\infty}$
is constant for a fixed integer $q$,  
where $\otimes$ denote the corona of graphs  
(see page 51 of \cite{CvDS} for a definition of corona 
and see also \cite{GM} for its generalization),
and $\lambda_{\min}$ denotes the smallest eigenvalue. 
Cvetkovi\'{c} and Stevanovi\'{c}
raised the following question: 

\begin{ques}
Do there exist other sequences of the line graphs 
of trees whose smallest eigenvalues are constant?
\end{ques}

In this paper, we give an answer for this question 
by 
giving a larger family of 
graph sequences of the line graphs 
of trees which have a constant smallest eigenvalue (Corollary \ref{cor:main}). 

For 
positive integers $ d_1=1$, $d_2\geq2,\dots,d_{k-1}\geq2$, $d_k\geq1$, 
we define a tree $ B(d_1,\dots,d_k) $ to be a
rooted tree with $k$ levels in which every vertex at level $j$ has degree
$d_{k-j+1}$. 
Note that 
$K_{n} \otimes K_{q}$ 
is isomorphic to the line graph 
$L(B(1,q,n))$ of the tree $B(1,q,n)$. 
The tree $B(d_1,\dots,d_k)$ is called a \emph{generalized Bethe tree}
(see \cite{RJ,RS}).

In the next section, 
we determine the characteristic polynomial of the line graph 
$L(B(d_1,\dots,d_k))$ of the tree $B(d_1,\dots,d_k)$ 
in a factored form, 
thereby showing that the smallest eigenvalue of $L(B(d_1,\dots,d_k))$ 
is independent of $d_k$ (Theorem \ref{thm:001g}).
We also show that the smallest eigenvalue of $L(B(d_1,\dots,d_k))$ 
has multiplicity $d_k-1$, and is a zero of a polynomial of degree $k-1$ 
(Theorem \ref{thm:main}). The characteristic polynomial of
$L(B(d_1,\dots,d_k))$ has also been determined by Rojo and Jim\'enez \cite{RJ}
using a different method, but our result gives more concrete information
about the smallest eigenvalue.

\section{The characteristic polynomial}\label{sec:1}

We denote by $\chi_G(\lambda)$ the characteristic polynomial of
the adjacency matrix $A(G)$ of a graph $G$, that is,
$\chi_G(\lambda)=\det(\lambda I -A(G))$.

Let $G$ and $H$ be rooted graphs with roots $u$ and $v$, respectively.  
We denote by $G \cdot H$ the graph obtained from $G$ and $H$ 
by identifying the vertices $u$ and $v$.  

\begin{lemma}[Schwenk {\cite[Corollary 2b]{LNM406}}]\label{lem:Schwenk2}
Let $G$ and $H$ be rooted graphs with roots $u$ and $v$, respectively. 
Then
\[
\chi_{G \cdot H}(\lambda) 
= \chi_{G-u}(\lambda)\chi_{H}(\lambda)
+\chi_{G}(\lambda)\chi_{H-v}(\lambda)
-\lambda\chi_{G-u}(\lambda)\chi_{H-v}(\lambda). 
\]
\end{lemma}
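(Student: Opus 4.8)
The plan is to give a short self-contained proof using multilinearity of the determinant and a block form of $\lambda I-A(G\cdot H)$; since the statement is Schwenk's, one could instead simply invoke \cite{LNM406}, but the argument is brief enough to record here. Let $w$ denote the vertex of $G\cdot H$ obtained by identifying $u$ and $v$, and order the vertices of $G\cdot H$ as $w$, then the vertices of $G-u$, then the vertices of $H-v$. Let $a$ be the $0$--$1$ column vector recording the neighbours of $u$ in $G-u$, and $b$ the analogous vector recording the neighbours of $v$ in $H-v$. Since $G\cdot H$ has no edge joining $V(G)\setminus\{u\}$ to $V(H)\setminus\{v\}$,
\[
\lambda I-A(G\cdot H)=
\begin{pmatrix}
\lambda & -a^{\top} & -b^{\top}\\
-a & \lambda I-A(G-u) & O\\
-b & O & \lambda I-A(H-v)
\end{pmatrix},
\]
and, with the induced vertex orders, the principal submatrix on rows and columns indexed by $\{w\}\cup V(G-u)$ is exactly $\lambda I-A(G)$, while that on rows and columns indexed by $\{w\}\cup V(H-v)$ is $\lambda I-A(H)$.

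Next, split the first column of the displayed matrix as $(\lambda,-a,0)^{\top}+(0,0,-b)^{\top}$, so that $\chi_{G\cdot H}(\lambda)=D_1+D_2$ by multilinearity of the determinant in that column. In $D_1$ the rows indexed by $V(H-v)$ are supported only on the columns indexed by $V(H-v)$, so a Laplace expansion along those rows gives $D_1=\chi_{H-v}(\lambda)\cdot\det(\lambda I-A(G))=\chi_G(\lambda)\chi_{H-v}(\lambda)$. In $D_2$ the rows indexed by $V(G-u)$ are supported only on the columns indexed by $V(G-u)$, so the same kind of expansion gives $D_2=\chi_{G-u}(\lambda)\cdot\det N$, where $N$ is obtained from $\lambda I-A(H)$ by replacing its $(v,v)$-entry with $0$; expanding $\det N$ along that row after writing $0=\lambda-\lambda$ yields $\det N=\chi_H(\lambda)-\lambda\chi_{H-v}(\lambda)$. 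Adding $D_1$ and $D_2$ gives the asserted identity.

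The only delicate point is the sign appearing in each of the two Laplace expansions, but in both cases the chosen set of rows coincides with the chosen set of columns, so the sign is $(-1)^{2s}=+1$; beyond that there is nothing but routine bookkeeping, and the one structural input — that deleting $w$ disconnects $G\cdot H$ into $G-u$ and $H-v$ — is immediate from the definition of $G\cdot H$. Alternatively, one can deduce the identity from Schwenk's general vertex-deletion expansion applied to $G\cdot H$ at $w$, using that every neighbour of $w$ and every cycle through $w$ lies wholly in the $G$-part or wholly in the $H$-part, and then re-applying that expansion to $G$ at $u$ and to $H$ at $v$ to rewrite the two parts; this avoids the sign computation at the cost of invoking the more general formula.
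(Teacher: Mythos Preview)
Your argument is correct. The paper does not prove this lemma at all; it simply quotes it from Schwenk \cite{LNM406}, so your self-contained determinant computation goes beyond what the paper provides. The block form, the column split $(\lambda,-a,0)^{\top}+(0,0,-b)^{\top}$, and the two Laplace expansions are all sound: in $D_1$ the $V(H-v)$-rows are indeed supported only on the $V(H-v)$-columns (the $-b^{\top}$ entries in row $w$ lie in those columns and are discarded when passing to the complementary minor), and in $D_2$ the analogous statement for the $V(G-u)$-rows holds because the first-column entries there have been zeroed out by the split. Your handling of the sign via $S=T$ in the generalized Laplace expansion is also correct, and the evaluation $\det N=\chi_H(\lambda)-\lambda\chi_{H-v}(\lambda)$ by linearity in the first row is routine. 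The alternative route you sketch at the end, via Schwenk's vertex-deletion expansion at $w$, is exactly how the original reference derives the identity, so you have recovered both a direct proof and the one the citation points to.
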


\begin{lemma}[Schwenk {\cite[Theorem 5]{LNM406}}]\label{lem:Schwenk}
Let $G_0$ be a graph of order $p$, and let $H$ be a rooted graph
with root $v$. Let $G$ be the graph obtained by
attaching $H$ to each vertex $u$ of $G_0$ by identifying $u$ with $v$.
Then
\[
\chi_G(\lambda)=\chi_{H-v}(\lambda)^p
\chi_{G_0}\left(\frac{\chi_H(\lambda)}{\chi_{H-v}(\lambda)}\right).
\]
\end{lemma}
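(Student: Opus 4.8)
The plan is to read off $\chi_G(\lambda)$ directly from the block structure of the adjacency matrix of $G$ and collapse it, by a single Schur-complement step, to $\chi_{G_0}$ evaluated at a rational function of $\lambda$. To set this up, label $V(G_0)=\{1,\dots,p\}$, put $A_0=A(G_0)$, let $m=|V(H)|-1$, let $B=A(H-v)$, and let $b\in\{0,1\}^m$ record the neighbours of the root $v$ in $H$, so that $A(H)=\left(\begin{smallmatrix}0 & b^{\top}\\ b & B\end{smallmatrix}\right)$. Ordering the vertices of $G$ as the $p$ identified roots (that is, $V(G_0)$) followed by the vertices of the $p$ vertex-disjoint copies of $H-v$, and using that the only edges from root $i$ into the $j$-th copy occur when $i=j$ while distinct copies share no non-root vertex, one obtains
\[
\lambda I-A(G)=\begin{pmatrix}\lambda I_p-A_0 & -I_p\otimes b^{\top}\\ -I_p\otimes b & I_p\otimes(\lambda I_m-B)\end{pmatrix}.
\]

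Next I would assume $\lambda$ is not a root of $\chi_{H-v}$, so that $\lambda I_m-B$, and hence the lower-right block, is invertible, and take the Schur complement of that block. Being block-diagonal with $p$ copies of $\lambda I_m-B$, the lower-right block has determinant $\chi_{H-v}(\lambda)^p$; and since $(A\otimes C)(A'\otimes C')=(AA')\otimes(CC')$ gives $(I_p\otimes b^{\top})\bigl(I_p\otimes(\lambda I_m-B)^{-1}\bigr)(I_p\otimes b)=s(\lambda)\,I_p$, where $s(\lambda):=b^{\top}(\lambda I_m-B)^{-1}b$ is a scalar, the Schur complement itself equals $(\lambda-s(\lambda))I_p-A_0$. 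Hence
\[
\chi_G(\lambda)=\chi_{H-v}(\lambda)^p\det\bigl((\lambda-s(\lambda))I_p-A_0\bigr)=\chi_{H-v}(\lambda)^p\,\chi_{G_0}\bigl(\lambda-s(\lambda)\bigr).
\]
Running the same reduction on $\lambda I-A(H)=\left(\begin{smallmatrix}\lambda & -b^{\top}\\ -b & \lambda I_m-B\end{smallmatrix}\right)$ gives $\chi_H(\lambda)=\chi_{H-v}(\lambda)\,(\lambda-s(\lambda))$, that is, $\lambda-s(\lambda)=\chi_H(\lambda)/\chi_{H-v}(\lambda)$; substituting this into the previous display yields the asserted identity for every $\lambda$ with $\chi_{H-v}(\lambda)\neq0$.

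The only remaining point, which I expect to be the main (and rather minor) obstacle, is to promote this to an identity of polynomials. Writing $\chi_{G_0}(x)=\sum_{j=0}^p c_jx^j$, the right-hand side becomes $\sum_{j=0}^p c_j\,\chi_H(\lambda)^j\,\chi_{H-v}(\lambda)^{p-j}$, manifestly a polynomial in $\lambda$; since it agrees with the polynomial $\chi_G(\lambda)$ at all but the finitely many roots of $\chi_{H-v}$, the two coincide identically. An alternative would be to attach the copies of $H$ to the vertices of $G_0$ one at a time and induct on $p$ using Lemma~\ref{lem:Schwenk2}, but tracking the intermediate graphs is more cumbersome than the block computation above.
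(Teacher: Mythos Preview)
Your argument is correct. The block description of $A(G)$ is accurate, the Schur-complement identity $\det\bigl(\begin{smallmatrix}P&Q\\R&S\end{smallmatrix}\bigr)=\det S\cdot\det(P-QS^{-1}R)$ applies when $\chi_{H-v}(\lambda)\neq0$, the Kronecker computation collapsing $QS^{-1}R$ to $s(\lambda)I_p$ is right, and the identification $\lambda-s(\lambda)=\chi_H(\lambda)/\chi_{H-v}(\lambda)$ via the same reduction on $A(H)$ is exactly what is needed. The final passage from an identity on a cofinite set to an identity of polynomials is handled correctly by expanding $\chi_{G_0}$.

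There is no proof in the paper to compare against: the lemma is quoted from Schwenk \cite[Theorem~5]{LNM406} and used as a black box. Schwenk's original proof proceeds combinatorially, by induction on $p$ via the single-vertex attachment formula (your Lemma~\ref{lem:Schwenk2} alternative), rather than via a global matrix factorisation; your Schur-complement route is shorter and makes the role of the quotient $\chi_H/\chi_{H-v}$ transparent. Either approach is perfectly adequate here.
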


\begin{lemma}\label{lem:int}
Let $H$ be a rooted graph 
with root $v$,
and let $s\geq2$ be an integer.
Let $G$ be the graph obtained by
attaching $H$ to each vertex $u$ of 
the complete graph $K_s$ by identifying $u$ with $v$.
Then 
\begin{equation}\label{chiG}
\chi_G(\lambda)=
(\chi_H(\lambda)-(s-1)\chi_{H-v}(\lambda))
(\chi_H(\lambda)+\chi_{H-v}(\lambda))^{s-1},
\end{equation}
and the smallest eigenvalue of $G$ is the smallest zero of the
polynomial $\chi_H(\lambda)+\chi_{H-v}(\lambda)$.
In particular, the smallest eigenvalue of $G$ is independent of $s$.
\end{lemma}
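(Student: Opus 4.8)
The plan is to deduce the factorization~\eqref{chiG} from Schwenk's formula (Lemma~\ref{lem:Schwenk}) and then to pin down the smallest zeros of its two factors. First I would recall that $\chi_{K_s}(\lambda)=(\lambda-(s-1))(\lambda+1)^{s-1}$, since $K_s$ has eigenvalue $s-1$ with multiplicity~$1$ and eigenvalue $-1$ with multiplicity $s-1$. Applying Lemma~\ref{lem:Schwenk} with $G_0=K_s$ (of order~$s$), substituting this expression for $\chi_{K_s}$, and cancelling the factor $\chi_{H-v}(\lambda)^{s}$ yields the identity~\eqref{chiG}; the cancellation is valid as an identity of rational functions wherever $\chi_{H-v}(\lambda)\neq 0$, hence, both sides being polynomials, everywhere. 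For brevity set $a=\chi_H(\lambda)$, $b=\chi_{H-v}(\lambda)$, $p(\lambda)=a+b$ and $q(\lambda)=a-(s-1)b$, so that $\chi_G=q\cdot p^{\,s-1}$.

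Since $s\ge 2$, both $p$ and $q$ divide $\chi_G$, so each of them (being monic) is a product of linear factors of $\chi_G$ and therefore has only real zeros. Write $\theta$ for the smallest zero of $p$ and $\rho$ for the smallest zero of $q$; then $\lambda_{\min}(G)$, the smallest zero of $\chi_G$, equals $\min\{\theta,\rho\}$. As $p^{\,s-1}\mid\chi_G$ with $s-1\ge 1$, the number $\theta$ is itself an eigenvalue of $G$. Hence it suffices to prove the single inequality $\rho\ge\theta$: this yields $\lambda_{\min}(G)=\theta$, and since $p=\chi_H+\chi_{H-v}$ does not involve $s$, the independence of $s$ follows immediately. (Here $\theta$ is well defined because $p$ is monic of degree $|V(H)|\ge 1$.)

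To prove $\rho\ge\theta$ I would combine Cauchy interlacing, which gives $\lambda_{\min}(H)\le\lambda_{\min}(H-v)$, with the elementary fact that a monic polynomial with only real zeros has sign $(-1)^{d}$, where $d$ is its degree, at every point to the left of its smallest zero. Put $\mu=\lambda_{\min}(H)$ and $n=|V(H)|$. For $\lambda<\mu$ every zero of $\chi_H$ exceeds $\lambda$, so $\chi_H(\lambda)$ has sign $(-1)^{n}$, and since $\mu\le\lambda_{\min}(H-v)$ every zero of $\chi_{H-v}$ also exceeds $\lambda$, so $\chi_{H-v}(\lambda)$ has sign $(-1)^{n-1}$ (when $n=1$ we have $\chi_{H-v}=1$, consistent with $(-1)^{0}$). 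Thus on $(-\infty,\mu)$ the polynomials $a$ and $-(s-1)b$ share the constant nonzero sign $(-1)^{n}$, so $q=a-(s-1)b$ has no zero there and $\rho\ge\mu$. It remains to show $\theta\le\mu$. If $\mu=\lambda_{\min}(H-v)$, then $\mu$ is a common zero of $\chi_H$ and $\chi_{H-v}$, hence a zero of $p$, so $\theta\le\mu$. Otherwise $\mu<\lambda_{\min}(H-v)$, and $p(\mu)=\chi_H(\mu)+\chi_{H-v}(\mu)=\chi_{H-v}(\mu)$ has sign $(-1)^{n-1}$, while $p(\lambda)$ has sign $(-1)^{n}$ for all sufficiently negative $\lambda$ (as $p$ is monic of degree $n$); by the intermediate value theorem $p$ has a zero in $(-\infty,\mu)$, so $\theta<\mu$. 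Either way $\theta\le\mu\le\rho$, which finishes the proof.

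The conceptual step is the reduction to the inequality $\rho\ge\theta$; after that, the only things requiring care are the sign bookkeeping above and the degenerate possibility $\lambda_{\min}(H)=\lambda_{\min}(H-v)$ (which really can occur, for instance when $H$ is disconnected), and both are disposed of by the case distinction just described.
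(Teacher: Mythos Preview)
Your proof is correct and follows essentially the same route as the paper: derive \eqref{chiG} from Lemma~\ref{lem:Schwenk} with $G_0=K_s$, then use Cauchy interlacing together with a sign analysis of $\chi_H$ and $\chi_{H-v}$ on $(-\infty,\lambda_{\min}(H))$ to sandwich the smallest zeros of both factors around $\mu=\lambda_{\min}(H)$. The only cosmetic differences are that you treat the two parities of $|V(H)|$ uniformly via $(-1)^n$ (the paper splits into odd/even) and you make the degenerate case $\lambda_{\min}(H)=\lambda_{\min}(H-v)$ explicit, whereas the paper absorbs it into the weak inequality $\chi_{H-v}(a)\ge 0$.
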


\begin{proof}
Setting $G_0=K_s$ in Lemma~\ref{lem:Schwenk} gives (\ref{chiG}).
Let $a$ and $b$ denote the smallest zero of the polynomials
$\chi_H(\lambda)$ and
$\chi_{H-v}(\lambda)$, respectively.
Then by interlacing, we have $a\leq b$.

First consider the case where the number of vertices of $H$ is odd.
Then $\chi_H(\lambda)$ is a monic polynomial of odd degree, so
$\chi_H(\lambda)<0$ for $\lambda<a$. Since 
$\chi_{H-v}(\lambda)$ is a monic polynomial of even degree and
$a\leq b$, we have $\chi_{H-v}(\lambda)\geq0$ for $\lambda<a$.
Thus $\chi_H(\lambda)-(s-1)\chi_{H-v}(\lambda)<0$ for $\lambda<a$, 
implying that the smallest zero of the polynomial
$\chi_H(\lambda)-(s-1)\chi_{H-v}(\lambda)$ is at least $a$.
On the other hand,
since $\chi_H(\lambda)+\chi_{H-v}(\lambda)$
is a monic polynomial of odd degree and
$\chi_H(a)+\chi_{H-v}(a)=\chi_{H-v}(a)\geq0$, the smallest zero
of $\chi_H(a)+\chi_{H-v}(a)$ is at most $a$. 
This proves the assertion in this case.

The case where the number of vertices of $H$ is even can be
proved similarly.
\end{proof}

\begin{remark}
Note that if $H$ is a complete graph $K_t$, then the graph $G$ in
Lemma~\ref{lem:int} is the corona of $K_s$ and $K_{t-1}$. 
The spectrum of the corona $G_1\otimes G_2$ of two graphs 
$G_1$ and $G_2$ is given in \cite{corona}, under the assumption that
$G_2$ is regular.
\end{remark}

\begin{remark}
We remark that the statement of Lemma~\ref{lem:int} cannot be
strengthened to claim that the smallest eigenvalue of $G$ is not
a zero of the other factor $\chi_H(\lambda)-(s-1)\chi_{H-v}(\lambda)$.
Indeed, as pointed out in \cite[Remark 9]{GKMST}, if $H$
is the line graph of the Dynkin diagram $E_6$, then choosing
the vertex $r$ appropriately, we can make $\chi_H$ and $\chi_{H-v}$
to have a common smallest zero. In this case, the smallest eigenvalue
of $G$ is a common zero of 
$\chi_H(\lambda)-(s-1)\chi_{H-v}(\lambda)$ and
$\chi_H(\lambda)+\chi_{H-v}(\lambda)$.
\end{remark}

\begin{lemma}\label{lem:int2}
Let $H$ be a rooted graph with root $v$, and let $s\geq2$ be an integer.
Let $G$ be the graph obtained by 
attaching $H$ to all vertices $u$ of the complete graph $K_s$ except one, 
by identifying $u$ with $v$.
Then 
\begin{equation}\label{chiG2}
\chi_G(\lambda)= 
(\lambda\chi_H(\lambda)-((s-2)\lambda+(s-1))\chi_{H-v}(\lambda))
(\chi_H(\lambda)+\chi_{H-v}(\lambda))^{s-2}.
\end{equation}
\end{lemma}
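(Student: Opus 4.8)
The plan is to realize the graph $G$ of the statement as a vertex-deleted subgraph of a graph to which Lemma~\ref{lem:int} applies, and then recover $\chi_G$ by solving an instance of Schwenk's edge-reduction identity (Lemma~\ref{lem:Schwenk2}). Write $K_s$ on the vertex set $\{w_1,\dots,w_s\}$, with a copy of $H$ attached at each of $w_1,\dots,w_{s-1}$ (identifying $w_i$ with $v$), so that $w_s$ is the unique vertex carrying no copy of $H$; this is the graph $G$. Let $\widehat G$ be the graph obtained from $G$ by additionally attaching a copy of $H$ at $w_s$. Then $\widehat G$ is exactly the graph obtained by attaching $H$ to \emph{every} vertex of $K_s$, i.e.\ the graph $G$ of Lemma~\ref{lem:int}, while at the same time $\widehat G=G\cdot H$ with $G$ rooted at $w_s$ and $H$ rooted at $v$. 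Applying Lemma~\ref{lem:Schwenk2} to this decomposition gives
\[
\chi_{\widehat G}(\lambda)=\chi_{G-w_s}(\lambda)\chi_{H}(\lambda)+\chi_{G}(\lambda)\chi_{H-v}(\lambda)-\lambda\chi_{G-w_s}(\lambda)\chi_{H-v}(\lambda),
\]
and I intend to solve this equation for $\chi_{G}(\lambda)$.

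The two auxiliary polynomials on the right are already accessible. By Lemma~\ref{lem:int} applied to $\widehat G$, one has $\chi_{\widehat G}(\lambda)=(\chi_{H}(\lambda)-(s-1)\chi_{H-v}(\lambda))(\chi_{H}(\lambda)+\chi_{H-v}(\lambda))^{s-1}$. The graph $G-w_s$ is obtained by attaching $H$ to each of the $s-1$ vertices of $K_{s-1}$, so Lemma~\ref{lem:Schwenk} with $G_0=K_{s-1}$, together with $\chi_{K_{s-1}}(t)=(t-(s-2))(t+1)^{s-2}$, yields $\chi_{G-w_s}(\lambda)=(\chi_{H}(\lambda)-(s-2)\chi_{H-v}(\lambda))(\chi_{H}(\lambda)+\chi_{H-v}(\lambda))^{s-2}$; this also covers the boundary case $s=2$, where $K_1$ is a single vertex and the identity degenerates to $\chi_{G-w_2}=\chi_{H}$. (For $s\ge 3$ one could instead quote Lemma~\ref{lem:int} itself with $K_{s-1}$ in place of $K_s$.)

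Substituting these expressions into the displayed identity, writing $x=\chi_{H}(\lambda)$ and $y=\chi_{H-v}(\lambda)$, and pulling out the common factor $(x+y)^{s-2}$, one is left with
\[
\chi_{G}(\lambda)\,y=(x+y)^{s-2}\bigl[(x-(s-1)y)(x+y)-(x-(s-2)y)(x-\lambda y)\bigr].
\]
Expanding the bracket is routine, and it collapses to $y\bigl(\lambda x-((s-2)\lambda+(s-1))y\bigr)$; since $y=\chi_{H-v}(\lambda)$ is a nonzero (monic) polynomial, it may be cancelled from both sides, leaving precisely $(\ref{chiG2})$.

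The only points that deserve care are the reduction to the boundary case $s=2$ (handled above via the degenerate form of Lemma~\ref{lem:Schwenk}) and the cancellation of $\chi_{H-v}(\lambda)$, which is legitimate because the explicit factorization shows that it divides the right-hand side and it is not the zero polynomial; the remaining polynomial bookkeeping is elementary, so I anticipate no genuine obstacle. The shape of the answer is transparent in hindsight: the symmetry permuting the $s-1$ copies of $H$ forces an $(s-2)$-fold factor $\chi_{H}+\chi_{H-v}$ — one fewer than in Lemma~\ref{lem:int}, since the extra vertex $w_s$ is adjacent to all of the roots and thereby spoils one of those modes.
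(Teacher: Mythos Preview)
Your proof is correct and follows essentially the same route as the paper: both root $G$ at the exceptional vertex, recognize $G\cdot H$ as the fully-attached graph of Lemma~\ref{lem:int}, compute $\chi_{G\cdot H}$ and $\chi_{G-w_s}$ via Lemma~\ref{lem:int} (or Lemma~\ref{lem:Schwenk}), and then solve Schwenk's identity (Lemma~\ref{lem:Schwenk2}) for $\chi_G$. Your treatment is slightly more careful in explicitly handling the boundary case $s=2$, where Lemma~\ref{lem:int} (stated for $s\ge 2$) does not directly apply to $K_{s-1}=K_1$; the paper glosses over this, though the formula there is of course still valid.
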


\begin{proof}
Let $u_0$ be the vertex of $K_s$ 
for which the graph $H$ were not attached to construct $G$. 
Consider the graph $G$ as a rooted graph with root $u_0$. 
Recall that $G \cdot H$ is the graph obtained from $G$ and $H$ 
by identifying $u_0$ and $v$. 
Since $G \cdot H$ is the same as 
the graph obtained by
attaching $H$ to each vertex $u$ of 
the complete graph $K_s$ by identifying $u$ with $v$, 
it follows from 
Lemma~\ref{lem:int} that 
\[
\chi_{G \cdot H}(\lambda)=
(\chi_H(\lambda)-(s-1)\chi_{H-v}(\lambda))
(\chi_H(\lambda)+\chi_{H-v}(\lambda))^{s-1}. 
\]
Since $G - u_0$ is 
the graph obtained by
attaching $H$ to each vertex $u$ of 
the complete graph $K_{s-1}$ by identifying $u$ with $v$, 
it follows from 
Lemma \ref{lem:int} that 
\[
\chi_{G -u_0}(\lambda)=
(\chi_H(\lambda)-(s-2)\chi_{H-v}(\lambda))
(\chi_H(\lambda)+\chi_{H-v}(\lambda))^{s-2}.
\]
By Lemma \ref{lem:Schwenk2}, we have 

\[
\chi_{G}(\lambda)
=
\frac{\chi_{G \cdot H}(\lambda)
+(\lambda\chi_{H-v}(\lambda)-\chi_{H}(\lambda))\chi_{G-u_0}(\lambda)} 
{\chi_{H-v}(\lambda)}. 
\]
Now the lemma follows from the above three equations. 
\end{proof}

\begin{lemma}\label{lem:attach}
Let $d_1=1$, $d_2\geq2,\dots,d_k\geq2$ be integers, where
$k\geq2$. Regard $H=L(B(d_1,\dots,d_{k-1},1))$ as a rooted graph
whose root is the unique edge $e$ incident with the root of
$B(d_1,\dots,d_{k-1},1)$, and set $H'=H-e$.
Then $H'$ is isomorphic to
$L(B(d_1,\dots,d_{k-2},d_{k-1}-1))$, and the following statements hold.
\begin{enumerate}
\item[{\rm(i)}]
If $d_k\geq2$, then 
\[
\chi_{L(B(d_1,\dots,d_k))}(\lambda)=
(\chi_H(\lambda)-(d_k-1)\chi_{H'}(\lambda))
(\chi_H(\lambda)+\chi_{H'}(\lambda))^{d_k-1},
\]
and the smallest eigenvalue of $L(B(d_1,\dots,d_k))$ is
independent of $d_k$.
\item[{\rm(ii)}]
\begin{align*}
\chi_{L(B(d_1,\dots,d_k,1))}(\lambda)
&=
(\lambda\chi_H(\lambda)-((d_k-2)\lambda+(d_k-1))\chi_{H'}(\lambda))
\\&\quad\cdot
(\chi_H(\lambda)+\chi_{H'}(\lambda))^{d_k-2}.
\end{align*}
\end{enumerate}
\end{lemma}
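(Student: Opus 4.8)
The plan is to deduce both parts directly from Lemma~\ref{lem:int} and Lemma~\ref{lem:int2}, taking the rooted graph there to be $H$ and the parameter $s$ to be $d_k$. First I would verify the structural claim about $H'$. Since the last coordinate of $B(d_1,\dots,d_{k-1},1)$ is $1$, its root $r$ has degree $1$ and is joined by the single edge $e$ to the unique level-$2$ vertex $w$, which has degree $d_{k-1}$; the subtree rooted at $w$ obtained by deleting $r$ and $e$ has $k-1$ levels, its root $w$ has degree $d_{k-1}-1$, and its degree sequence down the levels is that of $B(d_1,\dots,d_{k-2},d_{k-1}-1)$. Because $e$ is a pendant edge of $B(d_1,\dots,d_{k-1},1)$, deleting the vertex $e$ from the line graph $H$ amounts to deleting the pendant edge $e$ (and the now-isolated leaf $r$) from the tree and then taking the line graph, so $H'=H-e\cong L(B(d_1,\dots,d_{k-2},d_{k-1}-1))$.

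For (i): the root of $B(d_1,\dots,d_k)$ has degree $d_k$, so its $d_k$ incident edges induce a clique $K_{d_k}$ in $L(B(d_1,\dots,d_k))$. For the $i$-th such edge $e_i=rw_i$, the subtree spanned by $r$, $w_i$, $e_i$ and all descendants of $w_i$ is, as a rooted tree with root $r$, isomorphic to $B(d_1,\dots,d_{k-1},1)$, and under this isomorphism $e_i$ is the root edge; hence its line graph is a copy of $H$ in which $e_i$ plays the role of the root $e$. Checking that distinct such copies meet only in the clique $K_{d_k}$ and that the only edges joining $e_i$ to its copy of $H-e$ are exactly those present between $e$ and $H-e$, one sees that $L(B(d_1,\dots,d_k))$ is precisely the graph obtained by attaching $H$ to each vertex of $K_{d_k}$ by identifying the root with that vertex. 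Lemma~\ref{lem:int} with $s=d_k\geq2$ then gives the displayed formula, and the independence of the smallest eigenvalue from $d_k$ is the last assertion of Lemma~\ref{lem:int}.

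For (ii): in $B(d_1,\dots,d_k,1)$ the root $r'$ has degree $1$, joined by $e'$ to the unique level-$2$ vertex $w'$, which has degree $d_k$; the $d_k$ edges at $w'$ — namely $e'$ together with the $d_k-1$ edges $e_1,\dots,e_{d_k-1}$ to the children of $w'$ — induce a clique $K_{d_k}$ in the line graph, the vertex $e'$ has no further neighbour (as $r'$ is a leaf), and, exactly as above, for each $j$ the subtree spanned by $w'$, $w_j$, $e_j$ and the descendants of $w_j$ yields a copy of $H$ rooted at $e_j$. Thus $L(B(d_1,\dots,d_k,1))$ is the graph obtained by attaching $H$ to every vertex of $K_{d_k}$ except $e'$, and Lemma~\ref{lem:int2} with $s=d_k$ gives the displayed formula.

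The step needing the most care is the two combinatorial identifications in (i) and (ii): that the line graph of $B(d_1,\dots,d_k)$, respectively $B(d_1,\dots,d_k,1)$, really does decompose as $K_{d_k}$ with copies of $H$ attached, with no extra adjacencies between distinct branches. This rests on the elementary facts that in the line graph of a tree the edges at a vertex of degree $d$ induce $K_d$ while two edges lying in different branches below a common vertex are non-adjacent, and that deleting from a line graph the vertex of a pendant edge returns the line graph of the tree with that pendant edge removed; these should be stated and applied carefully. Once the identifications are in place, (i) and (ii) follow at once from Lemmas~\ref{lem:int} and~\ref{lem:int2} (with $H'=H-e$), using only that $s=d_k\geq2$, which holds by hypothesis.
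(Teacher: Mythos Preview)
Your proposal is correct and follows exactly the paper's approach: identify $L(B(d_1,\dots,d_k))$ (resp.\ $L(B(d_1,\dots,d_k,1))$) as the graph of Lemma~\ref{lem:int} (resp.\ Lemma~\ref{lem:int2}) with $s=d_k$ and rooted graph $H$, then read off the formulas. The paper's proof is terser, dispatching the structural claim about $H'$ with ``Clearly'' and stating the two identifications without the supporting line-graph observations you spell out, but the logic is identical.
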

\begin{proof}
Clearly, $H'$ is isomorphic to
$L(B(d_1,\dots,d_{k-2},d_{k-1}-1))$.

(i) The graph $L(B(d_1,\dots,d_k))$ is obtained by attaching $H$ to
each vertex $u$ of the complete graph $K_{d_k}$ by identifying
$u$ with $e$. The result follows from Lemma~\ref{lem:int}.

(ii) The graph $L(B(d_1,\dots,d_k,1))$ is obtained by attaching $H$ to
all vertices $u$ of the complete graph $K_{d_k}$ except one,
by identifying
$u$ with $e$. The result follows from Lemma~\ref{lem:int2}.
\end{proof}

For the remainder of this section, 
we let $k\geq2$ be an integer, 
and let $ d_1=1$, $d_2\geq2,\dots,d_{k-1}\geq2$, $d_k\geq1$ be integers.
Set 
\begin{align*} 
m_l&=d_k\prod_{i=l+1}^{k-1}(d_{i}-1) \quad ( l = 0,1, \ldots, k-1),\\
m_k&=1,\\
\sigma_l&=m_l-m_{l+1} \quad ( l = 0,1, \ldots, k-1),\\
\sigma_k&=1.
\end{align*}
By convention, we have 
$\sigma_{k-1}=d_k-1$. 
We define polynomials $g_{i}(\lambda) $ ($i=0,1,\dots,k$) by
\begin{align*}
g_{i}(\lambda)&=(\lambda+2-d_{i})g_{i-1}(\lambda)-(d_{i}-1)g_{i-2}(\lambda) 
\quad(i=2,\dots,k-1),\\
g_{k}(\lambda)&=(\lambda+2-d_{k})g_{k-1}(\lambda)-d_{k}g_{k-2}(\lambda) 
\end{align*}
with seed values 
\[
g_0(\lambda)=1, \qquad
g_1(\lambda)=\lambda+1. 
\]
We note that these polynomials are essentially same as
the polynomials $P_j(\lambda)$ defined in \cite{RS} in the
sense that $g_j(\lambda)=P_j(\lambda+2)$. Also, it is easy to
show by induction that 
$g_{i}+g_{i-1}$ is divisible by $\lambda+2$ for $i=1,\dots,k-1$,
so $g_k$ is divisible by $\lambda+2$.

The formula (\ref{eq:char}) in the 
following theorem is due to Rojo and Jim\'enez \cite{RJ}.

\begin{theorem}\label{thm:001g}
Let $d_1=1$, $d_2\geq2,\dots,d_{k-1}\geq2$, $d_k\geq1$ be integers, where
$k\geq2$.
Then the characteristic polynomial of the line graph $L(B(d_1,\dots,d_k))$ 
of the generalized Bethe tree $B(d_1,\dots,d_k)$ is 
\begin{equation}\label{eq:char}
\chi_{L(B(d_1,\dots,d_k))}(\lambda)=
\frac{1}{\lambda+2}
\prod_{l=1}^{k} g_l(\lambda)^{\sigma_{l}}.
\end{equation}
The smallest eigenvalue of $L(B(d_1,\dots,d_k))$ 
depends only on
$d_2,\dots,d_{k-1}$, and is independent of $d_k$, 
as long as $d_k\geq2$.
\end{theorem}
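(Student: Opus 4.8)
The plan is to prove Theorem~\ref{thm:001g} by induction on $k$, using Lemma~\ref{lem:attach} as the main engine. First I would set up the induction carefully. The base case $k=2$ is a direct computation: $B(1,d_2)$ is a star $K_{1,d_2}$, its line graph is $K_{d_2}$, whose characteristic polynomial is $(\lambda-(d_2-1))(\lambda+1)^{d_2-1}$; one checks this equals $\frac{1}{\lambda+2}g_1^{\sigma_1}g_2^{\sigma_2}$ with $g_1=\lambda+1$, $g_2=(\lambda+2-d_2)(\lambda+1)-d_2=\lambda^2-(d_2-2)\lambda-2(d_2-1)=(\lambda+2)(\lambda-(d_2-1))$, $\sigma_1=d_2-1$, $\sigma_2=1$.

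For the inductive step I would treat the two cases of Lemma~\ref{lem:attach} uniformly by a single inductive hypothesis that is slightly stronger than~(\ref{eq:char}): namely, I would carry along \emph{two} formulas, one expressing $\chi_H$ and one expressing $\chi_{H'}$, where $H=L(B(d_1,\dots,d_{k-1},1))$ and $H'=L(B(d_1,\dots,d_{k-2},d_{k-1}-1))$ as in Lemma~\ref{lem:attach}. Concretely, the claim to be maintained is that $\chi_H(\lambda)=\frac{1}{\lambda+2}g_{k-1}(\lambda)\prod_{l=1}^{k-2}g_l(\lambda)^{\tilde\sigma_l}$ and $\chi_{H'}(\lambda)=\frac{1}{\lambda+2}g_{k-2}(\lambda)\prod\dots$, or more to the point, that the \emph{ratio}
\begin{equation*}
\frac{\chi_H(\lambda)}{\chi_{H'}(\lambda)}=\frac{g_{k-1}(\lambda)}{g_{k-2}(\lambda)}
\end{equation*}
holds up to the common factors, so that $\chi_H+\chi_{H'}$ and $\chi_H-(d_k-1)\chi_{H'}$ factor through $g_{k-1}+g_{k-2}$ and $g_{k-1}-(d_k-1)g_{k-2}$ respectively. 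Then I would plug these into the two formulas of Lemma~\ref{lem:attach}(i),(ii), and recognize the emerging polynomials: the key algebraic identities are $(\lambda+2-d_k)g_{k-1}-d_k g_{k-2}=g_k$ (the defining recursion for $g_k$) and the bookkeeping that $(\lambda+2)\mid(g_{k-1}+g_{k-2})$, which is exactly the divisibility remarked before the theorem and which lets the spurious factor $\frac{1}{\lambda+2}$ reappear correctly.

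The heart of the computation is matching the exponents $\sigma_l$. Writing $d_k'$ for the last parameter of the bigger tree, the multiplicities $m_l$ rescale by the factor $d_k'\prod(d_i-1)$, and I would verify that the exponent of $g_{k-1}$ coming out of Lemma~\ref{lem:attach}(i) is $d_k-1=\sigma_{k-1}$, the exponent of $g_k$ is $1=\sigma_k$, and each lower $g_l$ acquires exponent (old exponent)$\times d_k$ from the $(d_k-1)$-fold product plus one more copy from $\chi_H-(d_k-1)\chi_{H'}$, which telescopes to the right $\sigma_l$ via the identity $\sigma_l=m_l-m_{l+1}$. For case~(ii), the line $\lambda\chi_H-((d_k-2)\lambda+(d_k-1))\chi_{H'}$ must be shown to equal $\frac{1}{\lambda+2}\cdot g_k^{\text{(new)}}$ up to the common product; here the new $g_k$ is $(\lambda+2-1)g_{k-1}^{\text{(new)}}-1\cdot g_{k-2}^{\text{(new)}}$ with $g_{k-1}^{\text{(new)}}=(\lambda+2-d_k)g_{k-1}-d_kg_{k-2}=g_k^{\text{(old)}}$, so this reduces to a short manipulation using $g_{k-1}^{\text{(old)}}+g_{k-2}^{\text{(old)}}\mid$ stuff.

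Finally, the smallest-eigenvalue statement follows by iterating the last sentence of Lemma~\ref{lem:attach}(i): for $d_k\ge2$, $L(B(d_1,\dots,d_k))$ is obtained by attaching $H$ to $K_{d_k}$, and by Lemma~\ref{lem:int} its smallest eigenvalue is the smallest zero of $\chi_H+\chi_{H'}$, which is independent of $d_k$; and $\chi_H+\chi_{H'}$ depends only on $d_2,\dots,d_{k-1}$ because $H=L(B(d_1,\dots,d_{k-1},1))$ and $H'=L(B(d_1,\dots,d_{k-2},d_{k-1}-1))$ do. I expect the main obstacle to be purely organizational: keeping the two running formulas for $\chi_H$ and $\chi_{H'}$ synchronized through the induction and verifying the exponent bookkeeping $\sigma_l=m_l-m_{l+1}$ cleanly, rather than any deep difficulty — all the genuine content is already packaged in Lemmas~\ref{lem:int}, \ref{lem:int2}, and~\ref{lem:attach}.
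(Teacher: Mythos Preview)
Your overall strategy---induction on $k$, invoke Lemma~\ref{lem:attach}(i) for the case $d_k\ge2$ and Lemma~\ref{lem:attach}(ii) to produce the $d_k=1$ formula needed at the next level, then track exponents---is exactly what the paper does. The paper organizes it as: within the step for level $k$, first prove~(\ref{eq:char}) for $d_k=1$ via Lemma~\ref{lem:attach}(ii) (this only needs the inductive hypothesis at levels $k-1$ and $k-2$), then use that together with the inductive hypothesis at level $k-1$ to do $d_k\ge2$ via Lemma~\ref{lem:attach}(i). Your ``strengthened hypothesis'' packaging is equivalent.

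However, your concrete intermediate formulas are wrong, and if you follow them the algebra will not close. With $H=L(B(d_1,\dots,d_{k-1},1))$ and $H'=L(B(d_1,\dots,d_{k-2},d_{k-1}-1))$, applying~(\ref{eq:char}) to $H$ and $H'$ (with their own polynomial sequences $\hat g_l,\check g_l$) gives, after identifying $\hat g_l=\check g_l=g_l$ for $l\le k-2$,
\[
\chi_H=\frac{(\lambda+1)g_{k-1}-g_{k-2}}{\lambda+2}\,P,\qquad
\chi_{H'}=\frac{g_{k-1}+g_{k-2}}{\lambda+2}\,P,
\]
where $P=\prod_{l=1}^{k-2}g_l^{\sigma_l/d_k}$; the leading factors are \emph{not} $g_{k-1}$ and $g_{k-2}$, and the ratio $\chi_H/\chi_{H'}$ is not $g_{k-1}/g_{k-2}$. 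Consequently
\[
\chi_H+\chi_{H'}=g_{k-1}P,\qquad
\chi_H-(d_k-1)\chi_{H'}=\frac{g_k}{\lambda+2}\,P,
\]
so it is $\chi_H+\chi_{H'}$ that produces $g_{k-1}$ (via the cancellation $(\lambda+2)g_{k-1}$), while $\chi_H-(d_k-1)\chi_{H'}$ produces $g_k/(\lambda+2)$ directly through the recursion $(\lambda+2-d_k)g_{k-1}-d_kg_{k-2}=g_k$. Your stated factorizations ``through $g_{k-1}+g_{k-2}$ and $g_{k-1}-(d_k-1)g_{k-2}$'' are swapped and off by the $(\lambda+1)$-shift, and the divisibility $(\lambda+2)\mid(g_{k-1}+g_{k-2})$ is not what makes the $\frac{1}{\lambda+2}$ reappear here---that comes from $\hat g_k+\check g_{k-1}=(\lambda+2)g_{k-1}$. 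Once you correct these two lines, Lemma~\ref{lem:attach}(i) gives $\chi_{L(B(d_1,\dots,d_k))}=\frac{g_k}{\lambda+2}g_{k-1}^{d_k-1}P^{d_k}$, which is exactly~(\ref{eq:char}), and your exponent bookkeeping and the smallest-eigenvalue paragraph go through as written.
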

\begin{proof}
We prove the formula (\ref{eq:char}) by induction on $k$. If $k=2$, then
the right-hand side is 
\[
(\lambda+1-d_2)(\lambda+1)^{d_2-1}
\]
which is $\chi_{K_{d_2}}(\lambda)$.
Since $K_{d_2}=L(B(d_1,d_2))$, (\ref{eq:char}) holds.

Suppose $k\geq3$ and that the assertion holds for $k-1$.
We first establish the formula (\ref{eq:char}) for the case where $d_k=1$.
If $k=3$, then $L(B(d_1,d_2,1))=K_{d_2}$, and (\ref{eq:char}) holds since
$g_3=(\lambda+2)(\lambda+1)(\lambda+1-d_2)$.
If $k\geq4$, then
regard $H=L(B(d_1,\dots,d_{k-2},1))$ as a rooted graph whose root
is the unique edge $e$ incident with the root of 
$B(d_1,\dots,d_{k-2},1)$, and set $H'=H-e$.
Then by Lemma~\ref{lem:attach}(ii), we have
\begin{align}
\label{8ii}
\chi_{L(B(d_1,\dots,d_{k-1},1))}(\lambda)&=
(\lambda\chi_H(\lambda)-((d_{k-1}-2)\lambda+(d_{k-1}-1))\chi_{H'}(\lambda))
\\&\quad\cdot
(\chi_H(\lambda)+\chi_{H'}(\lambda))^{d_{k-1}-2}.
\notag
\end{align}
Also, $H'$ is isomorphic to
$L(B(d_1,\dots,d_{k-3},d_{k-2}-1))$ by Lemma~\ref{lem:attach}.

Let $\{g_l\}_{l=0}^{k}$, $\{\hat{g}_l\}_{l=0}^{k-1}$ and 
$\{\check{g}_l\}_{l=0}^{k-2}$
be the polynomials associated to the sequences
$(d_1,\dots,d_{k-1},1)$, $(d_1,\dots,d_{k-2},1)$, and $(d_1,\dots,d_{k-3},d_{k-2}-1)$,
respectively. Then
\begin{align*}
g_0&=\hat{g}_0=\check{g}_0=1,\\
g_1&=\hat{g}_1=\check{g}_1=\lambda+1,\\
&\vdots\\
g_{k-3}&=\hat{g}_{k-3}=\check{g}_{k-3},\\
g_{k-2}&=\hat{g}_{k-2}=\check{g}_{k-2}-g_{k-3},\\
g_{k-1}&=\hat{g}_{k-1}-(d_{k-1}-1)g_{k-2}-(d_{k-1}-2)g_{k-3}.
\end{align*}
Let
\begin{align*}
\hat{\sigma}_l&=
\begin{cases}
\prod_{i=l+1}^{k-2}(d_i-1)-\prod_{i=l+2}^{k-2}(d_i-1)
&\text{if $0\leq l\leq k-3$,}\\
0&\text{if $l=k-2$,}\\
1&\text{if $l=k-1$}
\end{cases}
\\&=
\begin{cases}
\frac{\sigma_l}{d_{k-1}-1}&\text{if $0\leq l\leq k-3$,}\\
0&\text{if $l=k-2$,}\\
1&\text{if $l=k-1$,}
\end{cases}
\displaybreak[0]\\
\check{\sigma}_l&=
\begin{cases}
(d_{k-2}-1)(\prod_{i=l+1}^{k-3}(d_i-1)-\prod_{i=l+2}^{k-3}(d_i-1))
&\text{if $0\leq l\leq k-4$,}\\
d_{k-2}-2&\text{if $l=k-3$,}\\
1&\text{if $l=k-2$}
\end{cases}
\\&=
\begin{cases}
\frac{\sigma_{l}}{d_{k-1}-1}
&\text{if $0\leq l\leq k-3$,}\\
1&\text{if $l=k-2$.}
\end{cases}
\end{align*}
By induction,
\begin{align}\label{chi1H}
\chi_{H}(\lambda)&=
\frac{1}{\lambda+2}
\prod_{l=1}^{k-1}\hat{g}_l^{\hat{\sigma}_l}
\\&=
\frac{1}{\lambda+2}
(g_{k-1}+(d_{k-1}-1)g_{k-2}+(d_{k-1}-2)g_{k-3})
\prod_{l=1}^{k-3}g_l^{\sigma_l/(d_{k-1}-1)},
\notag\\
\chi_{H'}(\lambda)&=
\frac{1}{\lambda+2}
\prod_{l=1}^{k-2}\check{g}_l^{\check{\sigma}_l}
\label{chi1H'}\\&=
\frac{1}{\lambda+2}
(g_{k-2}+g_{k-3})\prod_{l=1}^{k-3}g_l^{\sigma_l/(d_{k-1}-1)}.
\notag
\end{align}
Since
\begin{align*}
&\lambda\chi_H(\lambda)-((d_{k-1}-2)\lambda+(d_{k-1}-1))\chi_{H'}(\lambda)
\\&=
(\lambda(g_{k-1}+(d_{k-1}-1)g_{k-2}+(d_{k-1}-2)g_{k-3})
\\&\qquad
-((d_{k-1}-2)\lambda+(d_{k-1}-1))(g_{k-2}+g_{k-3}))
\frac{\prod_{l=1}^{k-3} g_l^{\sigma_l/(d_{k-1}-1)}}{\lambda+2}
\\&=
(\lambda g_{k-1}+(\lambda-d_{k-1}+1)g_{k-2}-(d_{k-1}-1)g_{k-3})
\frac{\prod_{l=1}^{k-3} g_l^{\sigma_l/(d_{k-1}-1)}}{\lambda+2}
\\&=
((\lambda+1) g_{k-1}-g_{k-2})
\frac{\prod_{l=1}^{k-3} g_l^{\sigma_l/(d_{k-1}-1)}}{\lambda+2}
\end{align*}
and
\begin{align*}
\chi_H(\lambda)+\chi_{H'}(\lambda)&=
(g_{k-1}+(d_{k-1}-1)g_{k-2}+(d_{k-1}-2)g_{k-3}+(g_{k-2}+g_{k-3}))
\\&\quad\cdot
\frac{\prod_{l=1}^{k-3} g_l^{\sigma_l/(d_{k-1}-1)}}{\lambda+2}
\\&=
(g_{k-1}+d_{k-1}g_{k-2}+(d_{k-1}-1)g_{k-3})
\frac{\prod_{l=1}^{k-3} g_l^{\sigma_l/(d_{k-1}-1)}}{\lambda+2}
\\&=
(\lambda+2)g_{k-2}
\frac{\prod_{l=1}^{k-3} g_l^{\sigma_l/(d_{k-1}-1)}}{\lambda+2}
\\&=
g_{k-2}\prod_{l=1}^{k-3} g_l^{\sigma_l/(d_{k-1}-1)},
\end{align*}
substitution of (\ref{chi1H}) and (\ref{chi1H'}) into
(\ref{8ii}) gives 
\begin{align*}
&\chi_{L(B(d_1,\dots,d_{k-1},1))}
\\&=
((\lambda+1) g_{k-1}-g_{k-2})
\frac{\prod_{l=1}^{k-3} g_l^{\sigma_l/(d_{k-1}-1)}}{\lambda+2}
\left(
g_{k-2}\prod_{l=1}^{k-3} g_l^{\sigma_l/(d_{k-1}-1)}
\right)^{d_{k-1}-2}
\\&=
((\lambda+1) g_{k-1}-g_{k-2})
g_{k-2}^{d_{k-1}-2}
\frac{\prod_{l=1}^{k-3} g_l^{\sigma_l}}{\lambda+2}
\\&=
\frac{1}{\lambda+2}((\lambda+1)g_{k-1}-g_{k-2})
\prod_{l=1}^{k-2}g_l^{\sigma_l}.
\end{align*}
This establishes (\ref{eq:char}) for the case $d_k=1$.

Next we consider the case where $d_k\geq2$.
Let $\{g_l\}_{l=0}^{k}$, $\{\hat{g}_l\}_{l=0}^{k}$ and 
$\{\check{g}_l\}_{l=0}^{k-1}$
be the polynomials associated to the sequences
$(d_1,\dots,d_k)$, $(d_1,\dots,d_{k-1},1)$, and $(d_1,\dots,d_{k-2},d_{k-1}-1)$,
respectively. Then
\begin{align*}
g_0&=\hat{g}_0=\check{g}_0=1,\\
g_1&=\hat{g}_1=\check{g}_1=\lambda+1,\\
&\vdots\\
g_{k-2}&=\hat{g}_{k-2}=\check{g}_{k-2},\\
g_{k-1}&=\hat{g}_{k-1}=\check{g}_{k-1}-g_{k-2},\\
g_{k}&=\hat{g}_{k}-(d_k-1)(g_{k-1}+g_{k-2}).
\end{align*}
Let
\begin{align*}
\hat{\sigma}_l&=
\begin{cases}
\prod_{i=l+1}^{k-1}(d_i-1)-\prod_{i=l+2}^{k-1}(d_i-1)
&\text{if $0\leq l\leq k-2$,}\\
0&\text{if $l=k-1$,}\\
1&\text{if $l=k$}
\end{cases}
\\&=
\begin{cases}
\frac{\sigma_l}{d_k}&\text{if $0\leq l\leq k-2$,}\\
0&\text{if $l=k-1$,}\\
1&\text{if $l=k$,}
\end{cases}
\\
\check{\sigma}_l&=
\begin{cases}
\prod_{i=l+1}^{k-1}(d_i-1)-\prod_{i=l+2}^{k-1}(d_i-1)
&\text{if $0\leq l\leq k-2$,}\\
1&\text{if $l=k-1$}
\end{cases}
\\&=
\begin{cases}
\frac{\sigma_{l}}{d_k}
&\text{if $0\leq l\leq k-2$,}\\
1&\text{if $l=k-1$.}
\end{cases}
\end{align*}
By the first part,
\begin{align}\label{chi1g}
\chi_{L(B(d_1,\dots,d_{k-1},1))}(\lambda)&=
\frac{1}{\lambda+2}
\prod_{l=1}^{k}\hat{g}_l^{\hat{\sigma}_{l}}
\\&=
\frac{1}{\lambda+2}(g_{k}+(d_k-1)(g_{k-1}+g_{k-2}))
\prod_{l=1}^{k-2} g_l^{\sigma_{l}/d_k},
\notag\displaybreak[0]\\
\intertext{and by induction,}
\label{chi2g}
\chi_{L(B(d_1,\dots,d_{k-2},d_{k-1}-1))}(\lambda)&=
\frac{1}{\lambda+2}
\prod_{l=1}^{k-1}\check{g}_l^{\check{\sigma}_{l}}
\\&=
\frac{1}{\lambda+2}(g_{k-1}+g_{k-2})
\prod_{l=1}^{k-2} g_l^{\sigma_{l}/d_k}.
\notag
\end{align}
Let $H=L(B(d_1,\dots,d_{k-1},1))$ and let $H'$ be as in
Lemma~\ref{lem:attach}. Then substitution of
(\ref{chi1g}) and (\ref{chi2g}) into the formula in
Lemma~\ref{lem:attach}(i) gives
\begin{align*}
\chi_{L(B(d_1,\dots,d_k))}(\lambda)
&=g_k(g_k+d_k(g_{k-1}+g_{k-2}))^{d_k-1}
\frac{\prod_{l=1}^{k-2}g_l^{\sigma_{l}}}{(\lambda+2)^{d_k}}
\\&=
g_k((\lambda+2)g_{k-1})^{d_k-1}
\frac{\prod_{l=1}^{k-2}g_l^{\sigma_{l}}}{(\lambda+2)^{d_k}}
\\&=
\frac{\prod_{l=1}^{k}g_l^{\sigma_{l}}}{\lambda+2}.
\end{align*}
Also, Lemma~\ref{lem:attach} implies that
the smallest eigenvalue of $L(B(d_1,\dots,d_{k-1},d_k))$ 
is independent of $d_k$.
\end{proof}

\section{A polynomial having the smallest eigenvalue as a zero}

Note that Lemma~\ref{lem:int} implies that the smallest eigenvalue
of $L(B(d_1,\dots,d_k))$ is the smallest zero of the 
polynomial
\begin{equation}\label{factor2}
\chi_{L(B(d_1,\dots,d_{k-1},1))}(\lambda)
+\chi_{L(B(d_1,\dots,d_{k-1}))}(\lambda)=
(g_k+d_k(g_{k-1}+g_{k-2}))\prod_{l=1}^{k-2} g_l^{\sigma_{l}/d_k}.
\end{equation}
It does not, however, tell which of the factors in the
right-hand side of (\ref{factor2}) contains the
smallest eigenvalue as a zero. Our next task is to show
that the smallest eigenvalue is a zero of $g_k+d_k(g_{k-1}+g_{k-2})$, but not
a zero of $g_l$ for $1\leq l\leq k-1$.

\begin{lemma}\label{lem:monotone} 
For $i=1,2, \ldots, k$, 
all the zeros of the polynomial $g_i(\lambda)$ 
are simple. 
\end{lemma}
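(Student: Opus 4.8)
The plan is to prove simplicity of the zeros of $g_i$ by exhibiting the sequence $g_0, g_1, \dots, g_i$ as (essentially) an orthogonal polynomial sequence, or at least a Sturm-type sequence, arising from a symmetric tridiagonal matrix. Concretely, I would first observe that the three-term recurrence
\[
g_i(\lambda)=(\lambda+2-d_i)g_{i-1}(\lambda)-(d_i-1)g_{i-2}(\lambda)\qquad(2\le i\le k-1)
\]
with $g_0=1$, $g_1=\lambda+1$ means that $g_i(\lambda)$ is, up to the shift $\lambda\mapsto\lambda+2$, the characteristic polynomial of the leading $i\times i$ principal submatrix of a Jacobi (tridiagonal) matrix $J$ whose diagonal entries are $d_1-2, d_2-2,\dots$ and whose off-diagonal products are $d_i-1>0$. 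In fact this is already hinted at in the excerpt: $g_j(\lambda)=P_j(\lambda+2)$ where $P_j$ comes from \cite{RS}, and the $P_j$ are precisely the principal-minor polynomials of the symmetric tridiagonal matrix associated with the generalized Bethe tree. Since all the relevant off-diagonal products $d_i-1$ are strictly positive (using $d_2,\dots,d_{k-1}\ge 2$), the matrix is a genuine Jacobi matrix, and it is classical that the characteristic polynomials of the leading principal submatrices of such a matrix have simple, interlacing zeros.

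The one wrinkle is the top index: the recurrence for $g_k$ is
\[
g_k(\lambda)=(\lambda+2-d_k)g_{k-1}(\lambda)-d_k g_{k-2}(\lambda),
\]
so the coefficient $d_k$ in the $(k-2)$ slot differs from the "expected" $d_k-1$. I would handle this by first proving the lemma for $i=1,\dots,k-1$ using the Jacobi-matrix/Sturm-sequence argument above (here the off-diagonal products are $d_2-1,\dots,d_{k-1}-1$, all positive since each $d_i\ge 2$), and then treating $g_k$ separately. For $g_k$, note that $d_k>0$, so the full sequence $g_0,\dots,g_{k-1},g_k$ is again the principal-minor sequence of a Jacobi matrix $J_k$ of size $k$ — the last off-diagonal product being $d_k$, which is $\ge 1>0$ whenever $d_k\ge 1$ (and if $d_k=1$, one checks directly from $g_k=(\lambda+2)(\lambda+1-d_{k-1})\cdots$ that the zeros are still simple, or one simply notes the degenerate case reduces to $L(B(d_1,\dots,d_{k-2}))$ plus isolated factors). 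Thus $g_k$ is also a characteristic polynomial of a genuine Jacobi matrix and hence has simple zeros.

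The cleanest way to package both cases is: for each $i$, show that $g_i$ equals $\det(\lambda I - M_i)$ for a real symmetric tridiagonal matrix $M_i$ with nonzero sub/super-diagonal entries, then invoke the standard fact that a real symmetric matrix with an irreducible tridiagonal form has distinct eigenvalues. The symmetrization is routine: given the monic recurrence $p_i=(\lambda-a_i)p_{i-1}-b_i p_{i-2}$ with $b_i>0$, one sets $\tilde p_i = p_i/\sqrt{b_2\cdots b_i}$ and gets the symmetric recurrence with off-diagonal entries $\sqrt{b_i}$; here $a_i = d_i-2$ (shifted), $b_i=d_i-1$ for $2\le i\le k-1$ and $b_k=d_k$. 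Alternatively, one can run a direct Sturm-sequence induction: show that between consecutive zeros of $g_{i-1}$ there is exactly one zero of $g_i$, and that $g_i$ has one more zero outside, using the sign alternation forced by $g_i(\mu)=-b_i g_{i-2}(\mu)$ at a zero $\mu$ of $g_{i-1}$ together with $b_i>0$ and the interlacing of zeros of $g_{i-1}$ and $g_{i-2}$ — this gives $i$ simple zeros by induction.

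The main obstacle, such as it is, is purely bookkeeping: making sure the positivity condition $b_i>0$ holds in every case, in particular confirming that the anomalous coefficient $d_k$ (rather than $d_k-1$) in the recurrence for $g_k$ is still positive — it is, since $d_k\ge 1$ — and dealing gracefully with the boundary value $d_k=1$ where the leading factor $\lambda+2$ appears. No deep idea is needed beyond recognizing the $g_i$ as orthogonal (Jacobi) polynomials; the rest is the classical simple-zero theorem for Jacobi matrices, which I would either cite or reprove in two lines via the Sturm-sequence induction sketched above.
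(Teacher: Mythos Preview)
Your proposal is correct and takes essentially the same approach as the paper: the paper's proof simply invokes Favard's theorem to identify $g_0,\dots,g_k$ as a sequence of orthogonal polynomials (which amounts precisely to the positivity condition $b_i>0$ that you verify) and then cites the standard simple-zero theorem for orthogonal polynomials from Chihara's book. Your Jacobi-matrix and Sturm-sequence formulations are just the matrix-theoretic side of the same classical fact, spelled out in more detail; the hedging about $d_k=1$ is unnecessary since, as you note yourself, $b_k=d_k\ge 1>0$ already suffices.
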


\begin{proof}
By Favard's Theorem 
(see Theorem 4.4 on page 21 of \cite{Chihara78}), 
$g_i(\lambda)$ $(i=0,1, \ldots,k)$ are orthogonal polynomials. 
By Theorem 5.2 on page 27 of \cite{Chihara78}, 
the lemma holds. 
\end{proof}

\begin{lemma}\label{lem:ind}
For $i=1,2, \dots, k$, let $\gamma_i$ 
be the smallest zero of the polynomial $g_i(\lambda)$, and 
let $\beta$ be the second smallest zero
of $g_k(\lambda)$.
Then $\gamma_1 > \gamma_2 > \cdots > \gamma_k=-2$
and $\beta>\gamma_{k-1}$.
\end{lemma}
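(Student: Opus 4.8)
The plan is to extract almost the whole statement from the theory of orthogonal polynomials, reducing the problem to the single arithmetic fact $\gamma_k=-2$. Recall from Lemma~\ref{lem:monotone} that $g_0,\dots,g_k$ is a sequence of monic orthogonal polynomials, and that the recurrence coefficients $d_i-1$ $(2\le i\le k-1)$ and $d_k$ $(i=k)$ are all strictly positive. I would therefore invoke the separation theorem for consecutive orthogonal polynomials (Theorem~5.3 on page~28 of \cite{Chihara78}): for each $i$ the zeros of $g_{i-1}$ and those of $g_i$ strictly interlace. In particular the least zero of $g_i$ is strictly below that of $g_{i-1}$, which gives $\gamma_1>\gamma_2>\cdots>\gamma_k$; and the least zero of $g_{k-1}$ lies strictly between the two smallest zeros $\gamma_k$ and $\beta$ of $g_k$, which gives $\gamma_k<\gamma_{k-1}<\beta$. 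Hence it remains only to prove $\gamma_k=-2$.

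Since $g_k$ is divisible by $\lambda+2$ (as noted just before Theorem~\ref{thm:001g}), we have $g_k(-2)=0$, so it suffices to show $g_k(\lambda)\ne0$ for every $\lambda<-2$. For this I would pass to $a_i(\lambda):=(-1)^i g_i(\lambda)$; multiplying the defining recurrences by $(-1)^i$ gives
\[
a_i=(d_i-2-\lambda)a_{i-1}-(d_i-1)a_{i-2}\quad(2\le i\le k-1),\qquad
a_k=(d_k-2-\lambda)a_{k-1}-d_k a_{k-2},
\]
together with $a_0=1$ and $a_1=-\lambda-1$. The heart of the argument is the monotone chain
\[
1=a_0(\lambda)\le a_1(\lambda)\le\cdots\le a_{k-1}(\lambda)\qquad(\lambda\le-2),
\]
proved by induction on $i$: one has $a_1-a_0=-\lambda-2\ge0$, and for $2\le i\le k-1$, since $d_i-2-\lambda\ge d_i$ when $\lambda\le-2$ and $a_{i-1}\ge a_{i-2}\ge1$ by induction,
\[
a_i\ge d_i a_{i-1}-(d_i-1)a_{i-2}=a_{i-1}+(d_i-1)(a_{i-1}-a_{i-2})\ge a_{i-1}.
\]

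Finally, for $\lambda<-2$ one has $d_k-2-\lambda>d_k\ge1$ and $a_{k-1}\ge a_{k-2}\ge1>0$, so $a_k(\lambda)>d_k\bigl(a_{k-1}(\lambda)-a_{k-2}(\lambda)\bigr)\ge0$; therefore $g_k(\lambda)=(-1)^k a_k(\lambda)\ne0$, and $-2$ is indeed the smallest zero of $g_k$, i.e.\ $\gamma_k=-2$. I expect the only step requiring foresight to be the choice of this strengthened induction hypothesis: the bare assertion that $a_i>0$ on $(-\infty,-2]$ does not propagate through the recurrence, because after the sign normalization the two terms contribute with opposite signs, whereas the monotone chain $a_0\le a_1\le\cdots$ is exactly self-sustaining. (If one prefers to avoid this computation, $\gamma_k=-2$ also follows from Theorem~\ref{thm:001g} together with the classical fact that the line graph of a tree has smallest eigenvalue $>-2$: then $(\lambda+2)\chi_{L(B(d_1,\dots,d_k))}=\prod_{l=1}^k g_l^{\sigma_l}$ has no zero below $-2$, and since $\sigma_k=1$, neither does $g_k$.)
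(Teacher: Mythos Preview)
Your argument is correct. Both you and the paper derive the strict chain $\gamma_1>\cdots>\gamma_k$ and the inequality $\gamma_{k-1}<\beta$ from the separation theorem for orthogonal polynomials, so the skeleton is identical. The difference lies in how $\gamma_k=-2$ is established: the paper takes precisely your parenthetical shortcut, combining Theorem~\ref{thm:001g} with the classical fact that line graphs of trees have smallest eigenvalue greater than $-2$ to conclude that $g_k/(\lambda+2)$ (which appears with exponent $\sigma_k=1$ in the characteristic polynomial) has no zero $\le-2$. Your primary route---the sign-normalised monotone chain $1=a_0\le a_1\le\cdots\le a_{k-1}$ on $(-\infty,-2]$---is longer but entirely self-contained: it appeals neither to Theorem~\ref{thm:001g} (whose inductive proof is substantially more work than the present lemma) nor to spectral facts about line graphs, and it would remain valid in any context where the $g_i$ arise from the same three-term recurrence without a graph-theoretic interpretation. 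The paper's approach is of course shorter once Theorem~\ref{thm:001g} is already available.
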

\begin{proof}
We have already noted $-2$ is a zero of $g_k(\lambda)$.
Since the smallest eigenvalue of the line graph of a tree is greater than $-2$,
Theorem~\ref{thm:001g} implies $\gamma_k=-2<\beta$.
Now the inequalities
follow from the separation theorem for the zeros 
(see Theorem 5.3 on page 28 of \cite{Chihara78}).
\end{proof}

\begin{theorem}\label{thm:main}
For integers 
$d_1=1$, $d_2\geq2,\dots,d_{k}\geq2$,
the smallest eigenvalue of $L(B(d_1,\dots,d_k))$ 
is the smallest zero of the polynomial
$g_{k-1}(\lambda)$, which is not a zero of any of the polynomials
\[
\frac{g_{k}(\lambda)}{\lambda+2},
g_1(\lambda),\dots,g_{k-2}(\lambda).
\]
In particular, 
the multiplicity of the smallest eigenvalue of $L(B(d_1,\dots,d_k))$ 
is $d_k - 1$. 
\end{theorem}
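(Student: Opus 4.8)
The plan is to combine the factorization in Theorem~\ref{thm:001g} with the interlacing/separation facts collected in Lemmas~\ref{lem:monotone} and~\ref{lem:ind}. From Theorem~\ref{thm:001g} we know
\[
\chi_{L(B(d_1,\dots,d_k))}(\lambda)=\frac{1}{\lambda+2}\prod_{l=1}^{k}g_l(\lambda)^{\sigma_l},
\]
so the smallest eigenvalue is the smallest zero of $\prod_{l=1}^{k}g_l^{\sigma_l}$ that exceeds $-2$; equivalently it is the minimum over $l\in\{1,\dots,k\}$ of the smallest zero of $g_l$ among those $l$ with $\sigma_l>0$, subject to discarding the root $-2$ of $g_k$. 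By Lemma~\ref{lem:ind} we have $\gamma_1>\gamma_2>\cdots>\gamma_{k-1}>\gamma_k=-2$, so among the candidates $\gamma_1,\dots,\gamma_{k-1}$ the smallest is $\gamma_{k-1}$, the smallest zero of $g_{k-1}$, and it satisfies $\gamma_{k-1}>-2$. The remaining point is that $g_k/(\lambda+2)$ does not pull the minimum below $\gamma_{k-1}$: its zeros are the zeros of $g_k$ other than $-2$, the smallest of which is $\beta$, and Lemma~\ref{lem:ind} gives $\beta>\gamma_{k-1}$. Hence the smallest eigenvalue is exactly $\gamma_{k-1}$.

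Next I would verify that $\gamma_{k-1}$ is not a zero of any of $g_k/(\lambda+2)$, $g_1,\dots,g_{k-2}$. For $g_1,\dots,g_{k-2}$ this is immediate from the strict inequalities $\gamma_{k-1}<\gamma_{k-2}<\cdots<\gamma_1$ together with the fact (Favard, via Lemma~\ref{lem:monotone}, and more precisely the interlacing structure of orthogonal polynomials) that all zeros of $g_l$ lie in $[\gamma_l,\infty)$; since $\gamma_{k-1}<\gamma_l$ for $l\le k-2$, $\gamma_{k-1}$ cannot be a zero of $g_l$. For the factor $g_k/(\lambda+2)$: its zeros are all $\ge\beta>\gamma_{k-1}$, so again $\gamma_{k-1}$ is not among them. (Note $\gamma_{k-1}\ne-2$ as well, by Lemma~\ref{lem:ind}, so there is no cancellation issue with the $1/(\lambda+2)$ prefactor.)

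Finally, the multiplicity statement: in the product $\prod_{l=1}^k g_l^{\sigma_l}$, the factor $g_{k-1}$ appears with exponent $\sigma_{k-1}=d_k-1$, and by the previous paragraph $\gamma_{k-1}$ is a zero of none of the other factors $g_l$ ($l\ne k-1$), nor is it cancelled by the $1/(\lambda+2)$; moreover $g_{k-1}$ has only simple zeros by Lemma~\ref{lem:monotone}, so $\gamma_{k-1}$ contributes exactly $\sigma_{k-1}=d_k-1$ to the multiplicity of the root $\gamma_{k-1}$ of $\chi_{L(B(d_1,\dots,d_k))}$. Therefore the multiplicity of the smallest eigenvalue is $d_k-1$.

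The routine parts are all pushed into the two preceding lemmas; the one place that needs a little care is making sure the candidate list for "smallest zero of the whole product" is handled correctly — specifically that removing the spurious root $-2$ (which divides out against $\lambda+2$) does not change which factor supplies the minimum. The key observation there is the single inequality $\beta>\gamma_{k-1}$ from Lemma~\ref{lem:ind}, which is exactly what rules out $g_k/(\lambda+2)$ as the source of the smallest eigenvalue. I expect this to be the only genuinely load-bearing step; everything else is bookkeeping with the exponents $\sigma_l$ and the chain of interlacing inequalities.
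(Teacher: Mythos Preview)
Your proof is correct and follows essentially the same approach as the paper: both combine the factorization of Theorem~\ref{thm:001g} with Lemma~\ref{lem:ind} (the chain $\gamma_1>\cdots>\gamma_{k-1}>-2$ and $\beta>\gamma_{k-1}$) to identify the smallest eigenvalue as $\gamma_{k-1}$, and then invoke Lemma~\ref{lem:monotone} together with $\sigma_{k-1}=d_k-1$ for the multiplicity. Your write-up is simply a more detailed unpacking of the paper's two-sentence proof; the only step you might make more explicit is that $\sigma_{k-1}=d_k-1\geq1$ (using the hypothesis $d_k\geq2$), so that $g_{k-1}$ actually occurs as a factor.
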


\begin{proof}
The first statement follows from Theorem~\ref{thm:001g} and
Lemma~\ref{lem:ind}.
By Theorem~\ref{thm:001g}, 
the multiplicity of the smallest eigenvalue is $d_k-1$, 
since $g_{k-1}$ has only simple zeros by Lemma~\ref{lem:monotone}.
\end{proof}

\begin{cor}\label{cor:main}
Let $G_i=L(B(d_1,\dots,d_{k-1},i))$ for $i=1,2,\dots$.
Then the sequence $\{\lambda_{\min}(G_i)\}_{i=1}^{\infty}$ is constant. 
\end{cor}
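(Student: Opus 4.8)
The plan is to read Corollary~\ref{cor:main} off Theorem~\ref{thm:main} almost for free. Fix the integers $d_1=1,\ d_2\ge2,\dots,d_{k-1}\ge2$ once and for all. For each $i$, apply the recursive recipe preceding Theorem~\ref{thm:001g} to the sequence $(d_1,\dots,d_{k-1},i)$ to produce the polynomials $g_0,g_1,\dots,g_{k-1}$. The observation that does all the work is that $g_{k-1}$ is literally the \emph{same} polynomial for every $i$: the seeds $g_0=1$ and $g_1=\lambda+1$ are absolute, and the three-term recurrence $g_j=(\lambda+2-d_j)g_{j-1}-(d_j-1)g_{j-2}$ producing $g_2,\dots,g_{k-1}$ consults only $d_2,\dots,d_{k-1}$; the varying entry $i=d_k$ is not used until one forms $g_k$. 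Thus $g_{k-1}(\lambda)$, and in particular its smallest zero $\gamma_{k-1}$, is independent of $i$.

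Next I invoke Theorem~\ref{thm:main}: for every $i\ge2$ the sequence $(d_1,\dots,d_{k-1},i)$ satisfies its hypotheses, so $\lambda_{\min}(G_i)=\lambda_{\min}(L(B(d_1,\dots,d_{k-1},i)))$ is exactly the smallest zero of $g_{k-1}(\lambda)$, namely $\gamma_{k-1}$. Hence $\{\lambda_{\min}(G_i)\}_{i\ge2}$ is the constant sequence $\gamma_{k-1},\gamma_{k-1},\dots$, which is the assertion of the corollary for $i\ge2$.

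The remaining, and I expect only genuinely delicate, point is the boundary value $i=1$, which is not covered by Theorem~\ref{thm:main} (it explicitly requires $d_k\ge2$). For $G_1=L(B(d_1,\dots,d_{k-1},1))$ I would go back to the explicit factorization~(\ref{eq:char}) of Theorem~\ref{thm:001g} specialized to $d_k=1$: after cancelling the factor $\lambda+2$, it expresses $\chi_{G_1}(\lambda)$ as a product of powers of $g_1,\dots,g_{k-2}$ together with one factor $g_k=(\lambda+1)g_{k-1}-g_{k-2}$. Using the strict interlacing $\gamma_1>\gamma_2>\dots>\gamma_{k-1}>-2$ from Lemma~\ref{lem:ind} and the simplicity of the zeros and the absence of common zeros of consecutive $g_j$ from Lemma~\ref{lem:monotone}, one locates the smallest zero of $\chi_{G_1}$; checking that this smallest zero is again $\gamma_{k-1}$ is the step that requires real care and is where I would expect the argument to concentrate. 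All of the $i\ge2$ part, by contrast, is a one-line appeal to Theorem~\ref{thm:main}.
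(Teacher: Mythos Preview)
Your argument for $i\ge2$ is exactly the paper's proof: observe that $g_{k-1}$ is built from $d_2,\dots,d_{k-1}$ only, hence is independent of $d_k$, and then quote Theorem~\ref{thm:main}. Nothing to add there.

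Where your proposal goes wrong is the boundary case $i=1$. You correctly notice that Theorem~\ref{thm:main} does not apply, and you set out to verify that $\lambda_{\min}(G_1)=\gamma_{k-1}$ nonetheless. That verification will fail, because the claim is false. With $d_k=1$ the exponent $\sigma_{k-1}=d_k-1$ vanishes, so in the factorization~(\ref{eq:char}) the polynomial $g_{k-1}$ does not occur at all: one has
\[
\chi_{G_1}(\lambda)=\frac{g_k(\lambda)}{\lambda+2}\prod_{l=1}^{k-2}g_l(\lambda)^{\sigma_l}.
\]
By Lemma~\ref{lem:ind} every zero of $g_1,\dots,g_{k-2}$ is at least $\gamma_{k-2}>\gamma_{k-1}$, and every zero of $g_k/(\lambda+2)$ is at least $\beta>\gamma_{k-1}$. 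Hence $\lambda_{\min}(G_1)>\gamma_{k-1}=\lambda_{\min}(G_i)$ for $i\ge2$, and the sequence is \emph{not} constant if one includes $i=1$. A tiny example: with $k=3$, $d_2=3$ one gets $G_1=L(B(1,3,1))\cong K_3$ with $\lambda_{\min}=-1$, whereas $g_2(\lambda)=\lambda^2-3$ gives $\lambda_{\min}(G_i)=-\sqrt{3}$ for $i\ge2$.

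So the ``delicate point'' you flag is not delicate but impossible. The paper's own one-line proof simply invokes Theorem~\ref{thm:main} and therefore covers only $i\ge2$; the corollary should be read with that restriction (which is also what Theorem~\ref{thm:001g} states explicitly). Your write-up should do the same: restrict to $i\ge2$ and drop the attempted treatment of $i=1$.
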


\begin{proof}
Since $g_{k-1}$ is independent of $d_k$, 
the corollary follows from Theorem~\ref{thm:main}. 
\end{proof}

\begin{remark}\label{rem:s2}
We note that the smallest eigenvalue of 
$L(B(d_1,\dots,d_{k-1},d_k))$ 
does depend on $d_{k-1}$.
Indeed, let 
$g_{k-1}^{(i)}(\lambda)$ be the $(k-1)$st polynomial associated with the sequence
$d_1=1,d_2,\dots,d_{k-1}^{(i)},d_k$
where $d_{k-1}^{(1)}\neq d_{k-1}^{(2)}$.
Suppose that $\gamma$ is the common smallest zero of 
$g_{k-1}^{(i)}(\lambda)$ for $i=1,2$. Since
\[
(\gamma+1)g_{k-2}(\gamma)
-(d_{k-1}^{(i)}-1)(g_{k-2}(\gamma)+g_{k-3}(\gamma))=0,
\]
we have
\[
(d_{k-1}^{(1)}-d_{k-1}^{(2)})(g_{k-2}(\gamma)+g_{k-3}(\gamma))=0.
\]
Since $d_{k-1}^{(1)}\neq d_{k-1}^{(2)}$, we have
$g_{k-2}(\gamma)+g_{k-3}(\gamma)=0$, and hence
$(\gamma+1)g_{k-2}(\gamma)=0$. 
By Lemma~\ref{lem:ind}, we have $g_{k-2}(\gamma)\neq0$,
while the same lemma also 
implies $\gamma<-1$
since $-1$ is the zero of $g_1(\lambda)$.
This is a contradiction.
\end{remark}


\end{document}